\newtheorem{theorem}{Theorem}
\newtheorem{lemma}[theorem]{Lemma}
\newtheorem{proposition}[theorem]{Proposition}
\newcommand{\seq}[1]{
\{{#1}_m\}_{m\in\mathbb \hN}}
\newcommand{\T}{\mathsf{T}}
\newcommand{\TM}{{\T}M}
\newcommand{\UpM}{\T_{+1}(M)}
\newcommand{\SOto}{\mathsf{SO}(2,1)}
\newcommand{\SOno}{\mathsf{O}(n-1,1)}
\newcommand{\R}{\mathbb{R}}
\newcommand{\Ht}{\mathsf{H}^2}
\newcommand{\tS}{\tilde{\Sigma}}
\newcommand{\U}{\mathsf{U}}
\newcommand{\Ss}{\mathsf{S}}
\newcommand{\US}{\U\Sigma}
\newcommand{\vv}{\mathsf{v}}
\newcommand{\dS}{\Ss_{+1}}
\newcommand{\Isom}{\mathsf{Isom}}
\newcommand\Ur{\U_{\mathsf{rec}}\Sigma}
\newcommand\tUr{\widetilde{\Ur}}
\newcommand\V{\V^{-}_{\mathsf{rec}}\Sigma}
\renewcommand\d{{\rm d}}
\renewcommand{\L}{\mathsf{L}}
\newcommand{\dG}{\L(\Gamma)}
\newcommand{\hN}{\widehat{N}}
\newcommand{\thN}{\widetilde{N}}
\newcommand{\EE}{\mathbb{E}}
\newcommand{\E}{\mathsf{E}}
\renewcommand{\V}{\mathsf{V}}
\newcommand{\dev}{\mathsf{dev}}
\newcommand{\Aff}{\mathsf{Aff}}
\title[Margulis Spacetimes]
{Geodesics in Margulis Spacetimes}
\author[Goldman]{William M. Goldman}
    \address{Department of Mathematics\\ University of Maryland\\
    College Park, MD 20742 USA}
    \email{wmg@math.umd.edu}
\author[Labourie]{
François LABOURIE }
    \address{
Univ.\  Paris-Sud, Laboratoire de Mathématiques, 
Orsay F-91405 Cedex; CNRS, Orsay cedex, F-91405}
\email{francois.labourie@math.u-psud.fr}
\date{\today}
\begin{document}

\maketitle
\begin{center} {\em Dedicated to the memory of Dan Rudolph}
\end{center}
\begin{abstract}
Let $M^3$ be a Margulis spacetime whose associated
complete hyperbolic surface $\Sigma^2$ has compact convex core.
Generalizing the correspondence between closed geodesics on $M^3$
and closed geodesics on $\Sigma^2$, we establish an orbit equivalence
between  recurrent spacelike geodesics on $M^3$ and
recurrent geodesics on $\Sigma^2$. In contrast, no timelike
geodesic recurs in either forward or backwards time.
\end{abstract}

\tableofcontents

\section*{Introduction}
A {\em Margulis spacetime\/} is a complete flat affine 
$3$-manifold $M^3$ with free nonabelian fundamental group 
$\Gamma$. 
It necessarily carries a unique parallel Lorentz metric.
Parallelism classes of timelike geodesics
form a noncompact complete hyperbolic surface $\Sigma^2$.
This complete hyperbolic surface is naturally associated to
the flat $3$-manifold $M^3$ and we regard $M^3$ as an
{\em affine deformation\/} of $\Sigma^2$.
This note relates the dynamics of the geodesic
flow of the flat affine manifold $M^3$ to the dynamics of the geodesic
flow on the hyperbolic surface $\Sigma^2$. 

We restrict to the case that $\Sigma^2$ has
a compact convex core 
(that is, $\Sigma^2$ has finite type and no cusps). 
Equivalently, the Fuchsian group $\Gamma_0$
corresponding to $\pi_1(\Sigma^2)$ is {\em convex cocompact. \/} 
In particular  $\Gamma_0$ is finitely generated and 
contains no parabolic elements. 
Under this assumption, every free homotopy
class of an essential closed curve in $\Sigma^2$ contains
a unique closed geodesic. 
Since $\Sigma^2$ and $M^3$ are homotopy-equivalent, 
free homotopy classes of essential closed curves in $M$
correspond to free homotopy classes of essential closed curves
in $\Sigma^2$. 
Every essential closed curve in $M^3$ is likewise homotopic
to a unique closed geodesic in $M^3$. 

In her thesis~\cite{Charette, CharetteGoldmanJones}, 
Charette studied the next case of dynamical behavior: 
geodesics spiralling around closed geodesics both in 
forward and backward time.
She proved bispiralling geodesics in $M^3$ exist,  
and correspond to bispiralling geodesics in $\Sigma^2$.  

This paper extends the above correspondence between geodesics on 
$\Sigma^2$ and $M^3$ to recurrent geodesics.

A geodesic (either in $\Sigma^2$ or in $M^3$) is {\em recurrent\/} 
if and only if it (together with its velocity vector) is recurrent in
{\em both\/} directions. These correspond to recurrent points for the
corresponding geodesic flows as in 
Katok-Hasselblatt~\cite{KatokHasselblatt}, \S 3.3.
(Our meaning of the term ``recurrent'' agrees with the
term ``nonwandering'' used by Eberlein~\cite{Eberlein}.)
Under our hypotheses on $\Sigma^2$,  a geodesic on $\Sigma^2$
is recurrent if and only if the corresponding orbit of the geodesic 
flow is precompact.

\begin{theorem}\label{thm:main}
Let $M^3$ be a Margulis spacetime whose associated
complete hyperbolic surface
 $\Sigma$ has compact convex core.
\begin{itemize}
\item
The recurrent part of the geodesic flow for  $\Sigma^2$ is 
{\em topologically orbit-equivalent\/} to the recurrent spacelike part of the geodesic flow of $M^3$.
\item
The set of recurrent spacelike geodesics in a Margulis spacetime is the closure of the set of periodic geodesics.
\item No timelike geodesic recurs.
\end{itemize}
\end{theorem}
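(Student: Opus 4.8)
The plan is to prove the stronger statement that a timelike geodesic cannot even be forward-recurrent, which \emph{a fortiori} rules out recurrence in both directions. First I would lift to the universal cover $\Eto$, on which $\Gamma$ acts by affine isometries whose linear parts form the convex cocompact Fuchsian group $\Gamma_0\subset\SOto$; write $\ell\colon\Gamma\to\Gamma_0$ for the linear-part homomorphism, which for a Margulis spacetime is an isomorphism. A timelike geodesic of $M^3$ lifts to an affine line $t\mapsto x+tv$ in $\Eto$, where the velocity $v$ is a future-pointing timelike vector; after normalizing, $v$ is a point of $\Ht$ (the hyperboloid of unit future timelike vectors). The crucial structural feature is that the velocity of an affine geodesic is parallel, hence constant: the direction $v\in\Ht$ does not change along the geodesic.

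Next I would exploit this invariance. The assignment $(x,v)\mapsto v$ defines a continuous map from the bundle $\Eto\times\Ht$ of unit future timelike vectors to $\Ht$ that is equivariant for the $\Gamma$-action $g\cdot(x,v)=(gx,\ell(g)v)$, and therefore descends to a continuous map $\pi$ from the timelike unit tangent bundle of $M^3$ to $\Sigma=\Ht/\Gamma_0$. Because the timelike geodesic flow fixes the velocity coordinate, $\pi$ is constant along every flow line. Consequently the entire forward orbit of any timelike unit vector $p$ lies in a single closed fiber $F=\pi^{-1}(\pi(p))$.

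I would then analyze this fiber. Since $\Gamma_0$ is a torsion-free discrete subgroup of $\SOto$, it acts freely and properly discontinuously on the interior $\Ht$; combined with injectivity of $\ell$, this shows that the stabilizer in $\Gamma$ of the point $v\in\Ht$ is trivial. Hence the fiber $F$ over $[v]\in\Sigma$ is identified with the full affine space $\Eto\cong\R^3$, on which the restricted geodesic flow is translation by the nonzero vector $v$. Translation by a nonzero constant vector on $\R^3$ is wandering and has no recurrent points. Since the forward orbit of $p$ stays in $F$, recurrence of $p$ would force recurrence of the corresponding point under this translation flow, which is impossible. Therefore no timelike geodesic recurs.

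This is essentially the easy (and purely topological) half of the theorem, and there is no serious obstacle once the right invariant is identified. The only points requiring care are that the linear-part map $\ell$ is a bijection onto $\Gamma_0$ and that proper discontinuity of the $\Gamma_0$-action on the interior $\Ht$ makes each orbit discrete; the latter is precisely the contrast with the spacelike case, where the relevant directions instead lie on the boundary $\so$ and the induced dynamics is the recurrent geodesic flow of $\Sigma$.
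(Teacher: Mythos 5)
Your argument for the third bullet is correct, and it is essentially the paper's own argument: since the velocity of an affine geodesic is parallel, the direction map $(x,v)\mapsto v$ descends to a flow-invariant projection $\T_{-1}M\to\Ht/\L(\Gamma)=\Sigma$, and properness of the $\L(\Gamma)$-action on $\Ht$ does the rest. (Properness is also what makes your fiber identification legitimate: the orbit $\L(\Gamma)v$ is discrete and closed in $\Ht$, so $\E\times\L(\Gamma)v$ is a closed $\Gamma$-invariant set on which the action is proper, and the fiber is a genuinely embedded copy of $\E$ carrying the translation flow, which has no recurrent points.) One slip in your closing remark: spacelike directions correspond to points of de Sitter space $\dS$, equivalently to oriented geodesics of $\Ht$, not to points of the boundary circle $\so$; it is the \emph{null} directions that correspond to $\partial\Ht$.

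The genuine gap is that the theorem makes three assertions and you prove only the last, which is the easy one; your method cannot yield the other two. For spacelike vectors the direction is still parallel, but $\L(\Gamma)$ does \emph{not} act properly on $\dS$ --- the orbit of a spacelike direction can accumulate on itself, the fibers of the direction map are no longer closed, and the induced dynamics on directions is precisely the geodesic flow of $\Sigma$, so no nonrecurrence obstruction arises; indeed the first bullet asserts that recurrence is abundant. Proving that bullet requires the machinery the paper imports from \cite{GoldmanLabourieMargulis}: one takes a neutralized section $N$ of the flat affine bundle $\EE_\Gamma$ restricted to $\Ur$, satisfying $\nabla_\phi N=f\nu$ where $\nu$ is the neutral section; properness of the $\Gamma$-action forces $\int F_\sigma\,\d\mu\neq 0$ for every $\phi$-invariant probability measure, and then Lemma~\ref{lemma:positive} (a continuous function with positive integral against every invariant measure is cohomologous to a positive function, proved via Birkhoff averages $f_T$) lets one correct $N$ to $\hN=N+g\nu$ with $\nabla_\phi\hN$ nowhere vanishing. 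This produces the orbit-preserving map $\Ur\to\UpM$ of Lemma~\ref{MainProp}, whose injectivity and surjectivity onto the recurrent spacelike set rest on Lemma~\ref{lemma:parallel} (a recurrent spacelike geodesic parallel to one in the image coincides with it, via boundedness of the associated parallel section and the Anosov property). The second bullet then follows by transporting the density of periodic orbits in the recurrent set of the geodesic flow of a convex cocompact surface through this orbit equivalence. None of these steps --- the neutralized section, the positivity-of-invariant-measures argument, or the parallelism lemma --- appears in your proposal, so as written it establishes only the nonrecurrence of timelike geodesics.
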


A semiconjugacy between these flows was observed by 
D.\ Fried~\cite{Fried}. 

This note is the sequel to \cite{GoldmanLabourieMargulis},
which characterizes properness of affine deformations by
positivity of a marked Lorentzian length spectrum,
the  {\em generalized Margulis invariant\/}. 
A crucial step in the proof that properness implies positivity
is the construction of sections of the associated flat affine bundle,
called {\em neutralized sections.\/} A further modification of 
neutralized sections produces an orbit equivalence between
recurrent geodesics in $\Sigma$ and recurrent geodesics in $M$.

It follows that the  set of recurrent spacelike orbits of the geodesic
flow is a Smale hyperbolic set in $\TM$.

Null geodesics not parallel to a point in the limit set $\Lambda$ 
of $\Gamma_0$ do not recur. 
We do not discuss the recurrence of  null geodesics parallel 
to a point of $\Lambda$  in this paper,
but suspect that  such null geodesics do not recur either.
Therefore we conjecture that the only recurrent orbits
of the orbit flow are spacelike.

We thank  Mike Boyle, Virginie Charette, Suhyoung Choi,
Todd Drumm, David Fried, and
Gregory Margulis  for helpful conversations. We are grateful to
Domingo Ruiz for pointing out several corrections.

\section{Geodesics on affine manifolds}

An {\em affinely flat manifold\/} is a smooth manifold with a
distinguished
atlas of local coordinate systems whose charts map to 
an affine space $\E$ such that the coordinate changes
are restrictions of affine automorphisms of $\E$.
Denote the group of affine automorphisms of $\E$ by
$\Aff(\E)$.
This structure is equivalent to a flat torsionfree affine connection.  
The affine coordinate atlas globalizes to a {\em developing map\/}
\begin{equation*}
\tilde{M} \xrightarrow{\dev} \E
\end{equation*}
where $\tilde{M}\to M$ denotes a universal covering space of $M$.
The coordinate changes globalize to 
an affine holonomy homomorphism
\begin{equation*}
\pi_1(M) \xrightarrow{\rho} \Aff(\E) 
\end{equation*}
where $\pi_1(M)$ denotes the group of deck transformations
of $\tilde{M}\to M$.  The developing map is equivariant
respecting $\rho$.

Denote the vector space of translations $\E\to\E$ by $\V$. 
The action of $\V$ by translations on $\E$ defines a trivialization
of the tangent bundle $\TM \cong M \times \V$.
In these local coordinate charts, a geodesic is a path
\begin{equation*}
p \longmapsto  p + t \vv
\end{equation*}
where $p\in\E$ and $\vv\in\V$ is a vector.
In terms of the trivialization the geodesic flow is:
\begin{align*} 
\E\times\V &\xrightarrow{\tilde\psi_t}  \E\times\V \\
(p,\vv) &\longmapsto  (p + t \vv, \vv)
\end{align*}
for $t\in\R$. Clearly this $\R$-action commutes with $\Aff(\E)$.

Geodesic completeness implies that $\dev$ is a diffeomorphism.
Thus the universal covering $\tilde M$ 
is affinely isomorphic to affine space $\E$ and 
$M\cong \E/\Gamma$, 
where $\Gamma := \rho\big(\pi_1(M)\big)$ 
is a discrete group of affine transformations 
acting properly and freely on $\E$. 

\section{Flat Lorentz $3$-manifolds}

Let $\Aff(\E) \xrightarrow{\L} \mathsf{GL}(\V)$
denote the homomorphism given by {\em linear part,\/}
that is, $\L(\gamma) = A$ where 
\begin{equation*}
p \xrightarrow{\gamma}A (p)  + b.
\end{equation*}
The differential of $\gamma$  at {\em any\/} point $p$ 
identifies with its linear part $\L(\gamma)$ via the identification
$\TM \cong M \times \V$.

Any $\L(\Gamma)$-invariant nondegenerate inner product 
$\langle,\rangle$
on $\V$ defines a $\Gamma$-invariant flat pseudo-Riemannian structure on $\E$ which descends to $M = \E/\Gamma$. 
In particular affine manifolds with  
$\L(\Gamma)\subset\mathsf{O}(n-1,1)$ are precisely 
the {\em flat Lorentzian manifolds,\/} and the underlying affine structures
their Levi-Civita connections.

For this reason we henceforth fix the invariant Lorentzian inner
product on $\V$, and hence the (parallel) flat Lorentzian structure
on $\E$. The group $\Isom(\E)$ of Lorentzian isometries is the
semidirect product of the group $\V$ of translations of $\E$ with
the orthogonal group $\SOno$ of linear isometries. 
Linear part $\Isom(\E) \xrightarrow{\L}\SOno$ defines the projection
homomorphism for the semidirect product.
For $l\in\R$, 
define 
\begin{equation*}
\Ss_l \;:=\; \{ \vv\in\V \mid \langle \vv,\vv\rangle\ =\ l \}.
\end{equation*}
When $l > 0$, $\Ss_l$ is a Riemannian submanifold of constant
curvature $-l^{-2}$ and when $l<0$, 
it is a Lorentzian submanifold of constant curvature $l^{-2}$.
In particular $\Ss_{-1}$ is a disjoint union of two isometrically
embedded copies of hyperbolic $n-1$-space $\mathsf{H}^{n-1}$ 
and $\Ss_1$ is {\em de Sitter space,\/} 
a model space of Lorentzian curvature $+1$.

The subset $\T_l(M)$ consisting of tangent vectors $v$ such that $\langle v,v\rangle = l$ is invariant under the geodesic flow.
Indeed, using parallel translation, these bundles trivialize over
the universal covering $\E$:
\begin{equation*}
\T_l(\E) \xrightarrow{\cong}  \E \times \Ss_l
\end{equation*}

Abels-Margulis-Soifer~\cite{AbelsMargulisSoifer1,
AbelsMargulisSoifer2} proved that 
if a discrete group of Lorentz isometries acts properly on 
Minkowski space $\E$, and $\L(\Gamma)$ is Zariski
dense in $\SOno$, then $n=3$. Consequently every
complete flat Lorentz manifold is a flat Euclidean affine fibration over
a complete flat Lorentz $3$-manifold. 
Thus we henceforth restrict to $n=3$.

Let $M^3$ be a complete affinely flat $3$-manifold.
By Fried-Goldman~\cite{FriedGoldman}, either $\Gamma$ is solvable
or $\L\circ h$  
embeds $\Gamma$ as a discrete subgroup in (a conjugate of)
the  orthogonal group
\begin{equation*}
\SOto \subset \mathsf{GL}(3,\R).
\end{equation*}
The cases when $\Gamma$ is solvable are easily classified
(see \cite{FriedGoldman}) and we assume we are in the latter case.
In that case, $M^3$ is a complete flat Lorentz $3$-manifold.

In the early 1980's Margulis, answering a question of Milnor~\cite{Milnor}), 
constructed the first examples~\cite{Margulis1,Margulis2}, 
which are now called {\em Margulis spacetimes.}
Explicit geometric constructions of these manifolds have been
given by Drumm~\cite{Drumm1,Drumm2} and his 
coauthors~\cite{Charette,Charette1,CharetteDrummGoldman,CharetteGoldman,
DrummGoldman}.

Since the hyperbolic plane $\Ht$ is the symmetric space of $\SOto$,
$\Gamma$ acts properly and discretely on $\Ht$.
Since $M^3$ is aspherical, its fundamental group $\pi_1(M^3)\cong\Gamma$ is torsionfree, so $\Gamma$ acts freely as well.
Therefore the quotient $\Ht/\L(\Gamma)$ is a complete hyperbolic
surface $\Sigma^2$. Furthermore, 
by Mess~\cite{Mess},  $\Sigma$ is noncompact.
(See Goldman-Margulis~\cite{GoldmanMargulis} and Labourie~\cite{Labourie} for alternate proofs.)
Furthermore 
every noncompact complete hyperbolic surface occurs for a Margulis
spacetime (Drumm~\cite{Drumm1})

The points of $\Sigma^2$ correspond to parallelism classes of 
(unoriented) timelike geodesics on $M^3$ as follows. 
It suffices to identify $\Ht$ with the parallelism classes of (unoriented) timelike geodesics in $\E$, equivariantly respecting 
$\Isom(\E) \xrightarrow{\L}\SOto$.
The velocity of a unit-speed timelike geodesic is in $\E$ 
is  a $\tilde\psi$-orbit in 
\begin{equation*}
\T_{-1}\E \;\cong\; \big(\E \times \Ss_{-1}\big).
\end{equation*}
The two components of $\Ss_{-1}$  correspond to to future-pointing
timelike geodesics and past-pointing timelike geodesics
respectively. Points in $\Ss_{-1}$ correspond to points in 
$\Ht$ (the projectivization of $\Ss_{-1}$) together with an orientation
of $\Ht$.
The geodesic flow $\tilde\psi$ gives $\T_{-1}\E$ the structure of a principal $\R$-bundle over the quotient. The quotient identifies with
an affine bundle over $\Ss_{-1} \cong \Ht \times \{\pm 1\}$ 
whose associated vector bundle is the tangent bundle, as follows.
The space of lines parallel to
a fixed timelike vector $\vv$ with the quotient affine space, whose
underlying vector space is $\V/(\vv)\ \cong\ (\vv)^\perp$.
The tangent space to $\Ss_{-1}$ at $\vv$ is $\vv^\perp$ proving the claim.

Passing to the quotient by $\Gamma$:
\begin{equation*}
\T_{-1}M \;\cong\; \big(\E \times \Ht\big)/\Gamma.
\end{equation*}

Since $\Gamma\xrightarrow{\L}\SOto$ is a discrete 
embedding~\cite{FriedGoldman},
$\SOto$ acting properly on $\Ht$ implies that $\Gamma$ acts
properly on $\Ht$.   
Cartesian projection $\E\times\Ht \to \Ht$ induces
a projection 
\begin{equation*}
\T_{-1}M \;\longrightarrow \Ht/\L(\Gamma) \;=\; \Sigma,
\end{equation*}
invariant under the restriction of the geodesic flow $\psi$ 
to $\T_{-1}M$, 
which defines an $\E$-bundle over $\Sigma$.
Its fiber over the orbit $\Gamma\vv$ of
a fixed future-pointing unit-timelike vector $\vv$ 
is the union of  geodesics in $M = \E/\Gamma$ parallel to 
$\Gamma\vv$. In particular  properness of the 
$\L(\Gamma)$-action on $\Ht$ implies nonrecurrence of timelike geodesics, the last statement in Theorem~\ref{thm:main}.

More generally, any $\L(\Gamma)$-invariant subset 
$\Omega\subset\V$ defines a subset
$\T_\Omega(M) \subset \TM$ invariant under the geodesic flow.
If $\Omega$ is an open set upon which $\L(\Gamma)$ acts properly, then the geodesic flow defines a proper $\R$-action
on $\T_\Omega(M)$.
In particular every geodesic whose velocity lies in $\Omega$
is properly immersed and is neither positively nor negatively recurrent.

An important example is the following. 
The lines in $\Ss_0$ form the {\em the ideal boundary,\/}
(the circle-at-infinity) $\partial\Ht$, of $\Ht$.
The {\em limit set\/} of $\L(\Gamma)$ consists of endpoints
of recurrent geodesic rays in $\Sigma$.
Furthermore $\Lambda_{\L(\Gamma)}$ is the unique closed
$\L(\Gamma)$-invariant closed subset of $\partial\Ht$.
In particular the set of fixed points of elements of $\L(\Gamma)$
is dense in $\Lambda_{\L(\Gamma)}$. 
Moreover $\L(\Gamma)$ acts properly on the complement 
$$
\Omega := \Ss_0\setminus \Lambda_{\L(\Gamma)}.
$$ 
Applying the above discussion, no geodesic tangent
to  $\T_\Omega(M)$ recurs. That is, a lightlike recurrent
geodesic ray must be parallel to 
$\Lambda_{\L(\Gamma)}$.


\section{From  geodesics in $\Sigma^2$ to geodesics in $M^3$}  

While timelike directions correspond to points of $\Sigma^2$,
spacelike directions correspond to geodesics in $\Ht$. 
The recurrent geodesics in $\Sigma$ intimately relate to the
recurrent spacelike geodesics on $M^3$. 

Denote the set of oriented spacelike geodesics in $\E$ by $\mathscr{S}$.
It identifies with the orbit space of the  geodesic flow $\tilde\psi$
on $\T_{+1}\E \cong \E \times \dS$. The natural map 
$\mathscr{S}\xrightarrow{\Upsilon}\dS$ which associates to a 
spacelike vector its direction is equivariant respecting
$\Isom(\E)\xrightarrow{\L}\SOto$.

The identity component of $\SOto$ acts simply transitively on 
the unit tangent bundle $\U\Ht$, and therefore we identify
$\SOto^0$ with $\U\Ht$ by choosing a basepoint $u_0$ in $\U\Ht$.
Unit-spacelike vectors in $\Ss_{+1}$ correspond to oriented geodesics in $\Ht$.
Explicitly, if $v\in\dS$, then there is a one-parameter subgroup $a(t)\in\SOto$, having $v$ as a fixed vector, and such that 
$$
\det(v,v^-,v^+)\ >\ 0,
$$
where $v^+$ is an expanding eigenvector of $a(t)$ (for $t>0$) and
$v^-$ is the contracting eigenvector.  Choose a basepoint 
$v_0\in\Ss_{+1}$ corresponding to the orbit of $u_0$ under the geodesic flow on $\US$.
Geodesics in $\Ht$ relate to spacelike directions by an equivariant mapping 
\begin{align*}
\U \Ht &\;
\longrightarrow 
\Ss_{+1} \\
g(u_0) &\longmapsto g(v_0)
\end{align*}
The unit tangent bundle $\US$ of $\Sigma$ identifies with the
quotient 
\begin{equation*}
\L(\Gamma)\backslash\U\Ht \;\cong\; \L(\Gamma)\backslash\SOto^0,
\end{equation*}
where the geodesic flow $\psi$ corresponds the right-action of $a(-t)$
(see, for example, \cite{GoldmanLabourieMargulis},\S 1.2).

Observe that a geodesic in $\Sigma^2$ is recurrent if and only if the endpoints of any of its lifts to $\tS\approx\Ht$  lie in the limit set
$\Lambda_{\L(\Gamma)}$ of $\dG$.   
If the convex core of $\Sigma^2$ is compact, 
then the union $\Ur$  of recurrent $\phi$-orbits is compact.

\begin{lemma}\label{MainProp} 
There exists an orbit-preserving map 
\begin{equation*}
\Ur \xrightarrow{\bold \hN}\UpM
\end{equation*}
mapping $\phi$-orbits injectively to recurrent $\psi$-orbits.
\end{lemma}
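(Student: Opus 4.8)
The plan is to construct $\hN$ by working on the universal cover and assembling two pieces of data attached to a recurrent geodesic: its spacelike direction and a base point in $\E$ furnished by a modified \emph{neutralized section}. Lift $\Ur$ to $\tUr \subset \U\Ht \cong \SOto^0$. The equivariant assignment $g(u_0)\mapsto g(v_0)$ attaches to each $u\in\tUr$ a spacelike direction $v(u)\in\dS$; since the defining one-parameter group fixes $v_0$, this direction is constant along every $\phi$-orbit, exactly as the direction of a spacelike geodesic must be. For the base point I would take the value $\nu(u)\in\E$ of the neutralized section of the associated flat affine bundle constructed in \cite{GoldmanLabourieMargulis}. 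Setting $\tilde\hN(u):=(\nu(u),v(u))\in\E\times\dS=\T_{+1}\E$ then defines the lift of the map I am after.

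First I would check that $\tilde\hN$ descends to a map $\Ur\to\UpM$. The direction map is equivariant for $\Isom(\E)\xrightarrow{\L}\SOto$ and the neutralized section is equivariant for the affine holonomy, $\nu(\L(\gamma)u)=\rho(\gamma)\nu(u)$; together these give $\tilde\hN(\L(\gamma)u)=\gamma\cdot\tilde\hN(u)$, so the map passes to the quotient. The crucial property of the neutralized section is that its flow-derivative points in the neutral direction, $\frac{d}{dt}\big|_{t=0}\nu(\phi_t u)=f(u)\,v(u)$ with $f(u)>0$. Since $v(\phi_t u)=v(u)$, this yields $\tilde\hN(\phi_t u)=\tilde\psi_{\alpha(u,t)}\tilde\hN(u)$ with reparametrizing cocycle $\alpha(u,t)=\int_0^t f(\phi_s u)\,\d s$, strictly increasing in $t$. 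Hence each $\phi$-orbit is carried onto a $\psi$-orbit by an orientation-preserving reparametrization.

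I expect strict positivity of $f$ over the whole of $\Ur$ to be the main obstacle, and this is exactly where \emph{properness} enters. On a periodic orbit closing up via the deck transformation $\gamma$, so that $\phi_T u=\L(\gamma)u$, equivariance together with the neutrality relation forces $\int_0^T f\,\d s$ to equal the $v(u)$-component of the translational part of $\rho(\gamma)$ --- the generalized Margulis invariant of $\gamma$ --- because the transverse $(v^+,v^-)$ contributions of $(\L(\gamma)-I)\nu(u)$ carry no neutral component. The positivity of the marked Lorentzian length spectrum proved in \cite{GoldmanLabourieMargulis} makes this invariant positive, consistently with $f>0$. The work is to pass from periodic orbits to the full compact recurrent set: I would realize, and modify, the neutralized section through the convergent stable and unstable integrals supplied by the hyperbolicity of $\phi$ along $\Ur$, producing a continuous section whose flow-derivative is strictly neutral with $f$ bounded below; compactness of $\Ur$ (valid because the convex core is compact) then gives the uniform positive bound.

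Injectivity on orbits and recurrence of the image should then follow formally. An oriented geodesic in $\Ht$ is determined by its direction in $\dS$, so distinct $\phi$-orbits in $\Ur$ determine distinct $\L(\Gamma)$-orbits of directions $v(u)$, whose endpoints lie in the limit set $\Lambda_{\L(\Gamma)}$, and hence by equivariance distinct $\psi$-orbits in $\UpM$; the induced map on orbit spaces is therefore injective. Finally, $\hN$ is a continuous orbit equivalence of $\Ur$ onto its image, and since $\Ur$ is compact with every point recurrent and orbit equivalence preserves recurrence, each image $\psi$-orbit is a recurrent $\psi$-orbit.
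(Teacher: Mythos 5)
Your construction is exactly the paper's: pair the value of a neutralized section (a $\Gamma$-equivariant map $\tUr\to\E$ with flow-derivative $f\nu$) with the neutral direction $\nu(x)\in\dS$, check equivariance, and observe that nowhere-vanishing $f$ makes each orbit map a monotone reparametrization onto a spacelike $\psi$-orbit; your remarks on orbitwise injectivity and on recurrence of the image via compactness of $\Ur$ are also fine. But the step you yourself flag as the main obstacle --- strict positivity of $f$ --- is where your argument has a genuine gap. You propose to get positivity from the positivity of the Margulis invariants of periodic orbits and then ``pass from periodic orbits to the full compact recurrent set'' via stable/unstable integrals and hyperbolicity. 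This does not work as stated: positivity of $\int_0^T f$ over every periodic orbit only controls $\int f\,\d\mu$ for measures supported on closed orbits, and since (by Sigmund-type density for hyperbolic sets) general invariant measures are weak-$*$ limits of periodic ones, the most you can conclude in the limit is $\int f\,\d\mu\geq 0$ --- an invariant measure with zero integral is not excluded, and then no modification of the section along the flow can make $f$ pointwise positive, since $\int f\,\d\mu$ is a cohomology invariant. Ruling out the zero-integral case from positivity on closed orbits alone is precisely the hard uniformity that is \emph{not} available here; the appeal to ``convergent stable and unstable integrals'' producing $f$ bounded below is an assertion, not an argument.

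The paper closes this gap differently, and this is the essential content of the proof. Properness of the $\Gamma$-action is invoked directly through Proposition~8.1 of \cite{GoldmanLabourieMargulis}, which gives $\int F_\sigma\,\d\mu\neq 0$ for \emph{every} $\phi$-invariant probability measure $\mu$ on $\Ur$ (not merely the periodic ones); connectedness of the space of invariant measures then fixes a uniform sign, normalized to be positive after conjugating by $-I$ if necessary (a sign adjustment your write-up also omits). Finally, the conversion from measure-positivity to pointwise positivity is not done by hyperbolicity at all but by the soft cohomological Lemma~\ref{lemma:positive}: $f$ is cohomologous to its time averages $f_T$, and $f_T>0$ for some $T$ by a weak-$*$ compactness argument; one then replaces $N$ by $\hN=N+g\nu$, which stays neutralized with $\nabla_\phi\hN$ nowhere zero. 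To repair your proposal you should replace the periodic-orbit step by the measure-theoretic nonvanishing statement from properness, and replace the stable/unstable modification by this averaging/coboundary correction.
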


\begin{proof} 
The associated flat affine bundle $\EE_\Gamma$ over $\US$
associated to the affine deformation $\Gamma$ 
is defined as follows.
The affine representation of $\Gamma$ defines a diagonal action
of $\Gamma$ on $\widetilde{\US} \times \E$.
Its total space is the quotient of the product
$\widetilde{\US} \times \E$
by the diagonal action of $\pi_1(\US)$: 
\begin{equation*}
\pi_1(\US)\longrightarrow\pi_1(\Sigma)\longrightarrow\Isom(\E).
\end{equation*}
Similarly the flat vector bundle $\V_\Gamma$
over $\US$ is the quotient of $\widetilde{\US} \times \V$ 
by the diagonal action:
\begin{equation*}
\pi_1(\US)\longrightarrow\pi_1(\Sigma)\longrightarrow\Isom(\E)\xrightarrow{\L} \SOto.
\end{equation*}
According to \cite{GoldmanLabourieMargulis}, 
the {\em neutral section\/} of $\V_\Gamma$ is a 
$\SOto$-invariant section
which is parallel with respect to the geodesic flow 
on $\US$. and arises from the graph of the 
$\SOto$-equivariant map 
\begin{equation*}
\U\tS \cong \U\Ht \longrightarrow \V
\end{equation*}
with image $\dS$, the space of unit-spacelike vectors in $\V$. 

Here is the main construction of \cite{GoldmanLabourieMargulis}.
To every section $\sigma$ of $\EE_\Gamma$ continuously 
differentiable along $\phi$, associate the function 
$$
F_\sigma \; :=\;  \langle \nabla_\phi\sigma, \nu \rangle
$$
on $\US$. (Here the covariant derivative of a section of $\EE_\Gamma$ along a vector field $\phi$ in the base is a section of the associated vector bundle
$\V_\Gamma$.)  Different choices of section $\sigma$ yield cohomologous functions $F_{\sigma}$.
(Recall that two functions $f_1,f_2$ are {\em cohomologous,\/}
written $f_1 \sim f_2$,  if 
\begin{equation*}
f_1 - f_2 \;=\; \phi\ g
\end{equation*}
for a function $g$ which is differentiable with respect to the vector field 
$\phi$ (\cite{KatokHasselblatt},\S 2.2).

Restrict the affine bundle $\EE_\Gamma$  to $\Ur$. 
Lemma~8.4 of \cite{GoldmanLabourieMargulis} 
guarantees the existence of a {\em neutralized section\/}, 
that is, a section $N$ of $(\EE_\Gamma)|_{\Ur}$ satisfying
\begin{equation*}
\nabla_\phi N=f\nu,
\end{equation*}
for some function $f$.

Although the following lemma is well known,
we could not find a proof in the literature.
For completeness, we supply a proof in the appendix.

\begin{lemma}\label{lemma:positive}
Let $X$ be a compact space equipped with a flow $\phi$. 
Let $f\in C(X)$, such that for  all $\phi$-invariant measures $\mu$
on $X$, 
\begin{equation*}
\int f\ \d\mu \;>\;0.
\end{equation*}
Then  $f$ is  cohomologous to a  positive function.
\end{lemma}

Since $\Gamma$ acts properly, Proposition~8.1  of 
\cite{GoldmanLabourieMargulis}
implies that $\int F_\sigma d\mu \neq  0$ for all $\phi$-invariant probability measures $\mu$ on $\Ur$.
Since the set of invariant measures is connected, 
$\int F_\sigma d\mu$ is either positive for all 
$\phi$-invariant probability measures $\mu$ on $\Ur$ or negative for all
$\phi$-invariant probability measures $\mu$ on $\Ur$.
Conjugating by $-I$ if necessary we may assume that
$\int F_\sigma d\mu > 0$.
Lemma~\ref{lemma:positive} implies
$F_\sigma + \phi g >0$ for some function $g$. Write 
\begin{equation*}
\hN\;=\;\ N +\ g \nu.
\end{equation*}
$\hN$ remains neutralized, and  $\nabla_\phi\hN$ vanishes nowhere.

Let $\tUr$ be the preimage of $\Ur$ in ${\mathsf U}\Ht$.
Then $\hN$ determines a $\Gamma$-equivariant map 
\begin{equation*}
\tUr\xrightarrow{\thN}\E.
\end{equation*}
Each $\tilde\phi$-orbit injectively maps to a spacelike geodesic.
The map 
\begin{align*}
\Ur &\xrightarrow{\bold{\hN}} \big(\E\times\dS\big)/\Gamma \\
x &\longmapsto
\big[(\hN(x), \nu(x)) \big].
\end{align*}
is the desired orbit equivalence $\Ur\longrightarrow\UpM$.
\end{proof}
\begin{lemma}\label{lemma:parallel}
Any spacelike recurrent geodesic parallel to a geodesic 
$\gamma$ in the image of $\bold \hN$ coincides with $\gamma$.
\end{lemma}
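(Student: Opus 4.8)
The plan is to lift everything to the developing image $\E$ and reduce the statement to a linear dichotomy governed by the expanding and contracting eigendirections attached to the spacelike direction $v=\nu(x)$. Write $\gamma=\bold{\hN}(x)$ for $x\in\Ur$, with direction $v\in\dS$. As one reads off from the neutral section, the lift of $\gamma$ is the affine line $p+\R v$ with $p=\thN(\tilde x)$: indeed $\nu$ is constant along each $\tilde\phi$-orbit, so $\nabla_\phi\hN=f\nu$ integrates to a straight line in direction $v$. A recurrent spacelike geodesic $\gamma'$ parallel to $\gamma$ then has a lift that is a parallel line, so after choosing that lift it equals $p+w+\R v$ for a unique offset $w\in v^\perp$, and the goal is to prove $w=0$. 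First I would recall that the one-parameter subgroup $a(t)\subset\SOto$ fixing $v$ has null eigenvectors $v^+,v^-$, with $a(t)v^\pm=e^{\pm\lambda t}v^\pm$ for some $\lambda>0$; an invariance computation shows $v^+,v^-\in v^\perp$ and that they span this Lorentzian plane, pointing to the two ideal endpoints of the geodesic determined by $v$. Since $x\in\Ur$ is recurrent, both endpoints lie in the limit set $\Lambda_{\L(\Gamma)}$, which is exactly what makes return sequences available.

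Next I would extract the linear dynamics along returns. Forward recurrence of $x$ produces $\gamma_n\in\Gamma$ and frame-times $s_n\to+\infty$ with $\L(\gamma_n)$ asymptotic to $g\,a(s_n)\,g^{-1}$, where $g\in\SOto^0$ satisfies $\tilde x=g(u_0)$; thus $\L(\gamma_n)v\to v$ while $\L(\gamma_n)$ expands $v^+$ like $e^{\lambda s_n}$ and contracts $v^-$ like $e^{-\lambda s_n}$, and the backward sequence reverses these roles. Because $f>0$ on the compact set $\Ur$, the affine arc-length along each line is a bi-Lipschitz reparametrization of frame-time, so forward (resp.\ backward) recurrence of the affine flow $\psi$ corresponds to $s_n\to+\infty$ (resp.\ $-\infty$). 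Writing $w=\alpha v^++\beta v^-$ and applying the return sequence $\gamma_n$ of $\gamma$ to the point of the parallel line at the matching time, the base part $\rho(\gamma_n)(p+\tau_n v)\to p$ by construction, while the transverse part contributes $\L(\gamma_n)w\approx\alpha e^{\lambda s_n}v^++\beta e^{-\lambda s_n}v^-$. This already exhibits the stable/unstable split: a nonzero $v^+$-component would drive the $\gamma'$-orbit out of every compact set along this very sequence, and properness of the $\Gamma$-action on $\E$ turns such escape into genuine non-recurrence; symmetrically the backward sequence penalizes a nonzero $v^-$-component. Formally this would force $\alpha=\beta=0$, hence $w=0$ and $\gamma'=\gamma$.

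The main obstacle is that recurrence of $\gamma'$ a priori furnishes \emph{its own} return sequence, controlling only the direction $v$ and not necessarily the one that returns $\gamma$. What one knows from $\L(\gamma_n')v\to v$ is merely that $\L(\gamma_n')$ approaches the stabilizer $g\{a(t)\}g^{-1}$ of $v$, i.e.\ $\L(\gamma_n')\approx g\,a(\theta_n)\,g^{-1}$ for some frame-time shift $\theta_n$ whose sign and growth are not pinned down by the direction alone. The crux is therefore to control $\theta_n$: I would show that any forward-affine-recurrence sequence for $\gamma'$ has $\theta_n\to+\infty$, by combining the $f>0$ reparametrization, which forces the affine return time $\tau_n'\to+\infty$ to come from frame-time $\to+\infty$, with the approximation $\gamma_n'\,\tilde\phi_{s_n}\tilde x\approx\tilde\phi_{s_n-\theta_n}\tilde x$, so that the single exponent $\theta_n$ simultaneously governs the frame return over the compact set $\Ur$ and the action $\L(\gamma_n')w\approx\alpha e^{\lambda\theta_n}v^++\beta e^{-\lambda\theta_n}v^-$. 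Once $\theta_n\to+\infty$ is secured, boundedness of this transverse part together with properness forbids $\alpha\neq0$, and the backward sequence forbids $\beta\neq0$, completing the argument.
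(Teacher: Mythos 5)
Your argument is correct in outline, but it takes a genuinely different route from the paper's, whose entire proof is four lines: a geodesic $\xi$ parallel to $\gamma=\bold\hN(g)$ determines a \emph{parallel section} $u$ of the flat bundle $\V_\Gamma$ along the orbit $g$; recurrence makes $u$ a bounded invariant parallel section along the orbit closure; and the Anosov property of the flow over the compact set $\Ur$ forces any such bounded section to lie along the neutral direction $\nu$, whence $\xi=\gamma$ up to reparametrization. Your decomposition $w=\alpha v^+ +\beta v^-$ with rates $e^{\pm\lambda\theta_n}$ is exactly that Anosov splitting made explicit by hand, and the synchronization problem you isolate in your third paragraph --- controlling the sign and growth of the frame shifts $\theta_n$ attached to $\gamma'$'s \emph{own} return sequence rather than $\gamma$'s --- is precisely the content compressed into the paper's one word ``bounded''. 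Your plan for it is completable: once $\rho(\gamma_n')$ returns the line $\tilde\xi$ nearly to itself, write $\tilde\xi=\tilde\gamma+w$ and use equivariance of $\thN$; projecting the return relation onto the $v$-component, which is transverse to the span of $v^\pm$, forces the frame return time to track the affine return time through the $f>0$ bi-Lipschitz time change, so $\theta_n\to+\infty$, and then the return of the offset, $\L(\gamma_n')w\to w$ modulo $\R v$, kills \emph{both} coefficients at once ($\alpha=0$ by boundedness of the left side, after which $\beta e^{-\lambda\theta_n}\to 0$ forces $\beta=0$); so in fact a single forward sequence suffices and your backward sequence is redundant. Two small points you should still close: the degenerate case of bounded $\theta_n$ (returns through finitely many group elements) must be excluded, which discreteness of $\L(\Gamma)$ together with injectivity of $\L$ on $\Gamma$ (there are no nontrivial translations in $\Gamma$) handles by reducing to the closed-geodesic case; and, as you yourself concede, the escape-to-infinity argument of your second paragraph proves nothing on its own, since divergence in $\E$ under group elements that are not return elements for $\gamma'$ is no obstruction to recurrence in the quotient. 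The trade-off: the paper's formulation is short and modular, quoting a standard fact about bounded invariant sections over hyperbolic sets, while yours is elementary and self-contained and displays exactly where compactness of $\Ur$, positivity of $f$, discreteness of $\L(\Gamma)$, and properness of the $\Gamma$-action each enter.
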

\begin{proof}
Let 
$t\stackrel{g}\longmapsto \phi_t(v)$ be an orbit in $\Ur$.
A geodesic $\xi$ parallel to $\bold \hN(g)$ determines a parallel section $u$ of $\V$ along $g$. Since $g$ recurs, the
resulting parallel  section is a bounded invariant parallel section along the closure of $g$. By the Anosov property, such a section is along $\nu$,
and therefore, up to reparametrization,  $\gamma=\bold \hN(g)$.
\end{proof}

\begin{proposition} 
$\bold \hN$ is injective and its image is the set of recurrent spacelike geodesics.
\end{proposition}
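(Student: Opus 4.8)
The plan is to deduce this proposition from Lemma~\ref{MainProp}, which already supplies that $\bold \hN$ is orbit-preserving, injective along each $\phi$-orbit, and carries every $\phi$-orbit to a \emph{recurrent} spacelike $\psi$-orbit. Thus the image is \emph{a priori} contained in the set of recurrent spacelike geodesics, and what remains is (i) injectivity across distinct orbits, so that together with the per-orbit injectivity of Lemma~\ref{MainProp} the map $\bold \hN$ is globally injective, and (ii) surjectivity onto the recurrent spacelike geodesics.

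For injectivity, the key remark is that the direction of the spacelike geodesic $\bold \hN(g)$ equals the value along $g$ of the neutral section $\nu$: since $\nu$ is $\phi$-parallel, this value is a single point of $\dS$ (modulo $\Gamma$). The neutral section arises from the $\SOto$-equivariant map $\U\Ht\to\dS$ whose fibre over $v_0$ is the $a(t)$-orbit of $u_0$, because the stabilizer in $\SOto$ of the spacelike direction $v_0$ is precisely the boost one-parameter subgroup $a(t)$. Hence the fibres of this map are exactly the geodesic-flow orbits, and it descends to a bijection between $\phi$-orbits in $\US$ and points of $\dS/\Gamma$. In particular the direction of $\bold \hN(g)$ determines $g$, so two distinct orbits yield spacelike geodesics of distinct directions, hence distinct $\psi$-orbits. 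This gives injectivity across orbits, and with Lemma~\ref{MainProp} the full injectivity of $\bold \hN$.

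For surjectivity, let $\gamma$ be a recurrent spacelike geodesic, and lift it to a spacelike line in $\E$ with direction $v\in\dS$. I first claim that the geodesic of $v$ in $\Ht$ is recurrent, i.e. both its endpoints lie in $\Lambda_{\L(\Gamma)}$. Let $\Omega\subset\dS$ be the open, $\L(\Gamma)$-invariant set of directions whose geodesic has an endpoint outside $\Lambda_{\L(\Gamma)}$; convex cocompactness of $\L(\Gamma)$ implies that $\L(\Gamma)$ acts properly on $\Omega$, exactly as in the null case treated above for $\Ss_0\setminus\Lambda_{\L(\Gamma)}$. By the properness criterion, no geodesic with velocity in $\T_\Omega(M)$ can recur, so the recurrence of $\gamma$ forces $v\notin\Omega$, i.e. $v$ lies over a recurrent orbit $g\in\Ur$. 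Then $\bold \hN(g)$ is a recurrent spacelike geodesic whose direction is again $v$, hence parallel to $\gamma$; Lemma~\ref{lemma:parallel} then forces $\gamma=\bold \hN(g)$, and $\gamma$ lies in the image.

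The step I expect to be the main obstacle is the properness assertion inside surjectivity: checking that $\L(\Gamma)$ acts properly on the set $\Omega$ of spacelike directions whose geodesic is non-recurrent. This is the de Sitter analogue of the argument already used for null directions in $\Ss_0\setminus\Lambda_{\L(\Gamma)}$, and it rests on the convergence dynamics of the convex cocompact group on $\partial\Ht$, which pushes any geodesic with an endpoint off $\Lambda_{\L(\Gamma)}$ out of every compact set. Once this is secured, Lemma~\ref{lemma:parallel} supplies the rigidity that pins $\gamma$ to the unique image geodesic sharing its direction, completing the proof.
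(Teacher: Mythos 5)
Your injectivity argument is correct, and in fact slightly streamlines the paper's: you identify the stabilizer of $v_0$ in $\SOto^0$ with the one-parameter group $a(t)$, so that the fibers of $\U\Ht\to\dS$ are exactly the geodesic-flow orbits and the orbit-to-direction map $\Ur\to\L(\Gamma)\backslash\dS$ is injective; combined with the per-orbit injectivity from Lemma~\ref{MainProp} this gives global injectivity without invoking Lemma~\ref{lemma:parallel}, whereas the paper routes this step through the fibers of $\Upsilon$ (parallelism classes) and Lemma~\ref{lemma:parallel}. Either way is fine.

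The surjectivity half, however, has a genuine gap, precisely at the step you flagged: the claim that $\L(\Gamma)$ acts properly on the set $\Omega\subset\dS$ of directions whose geodesic has \emph{at least one} endpoint outside $\Lambda_{\L(\Gamma)}$ is false. Identify $\dS$ with ordered pairs of distinct points of $\partial\Ht$, let $\gamma\in\L(\Gamma)$ be hyperbolic with fixed points $\lambda^{\pm}\in\Lambda_{\L(\Gamma)}$, and pick $\alpha,b\in\partial\Ht\setminus\Lambda_{\L(\Gamma)}$. The geodesics $g_n=(\gamma^{-n}\alpha,\,b)$ lie in $\Omega$ and converge to $(\lambda^-,b)\in\Omega$, while $\gamma^n g_n=(\alpha,\gamma^n b)$ converges to $(\alpha,\lambda^+)\in\Omega$; hence the compact set $K\subset\Omega$ consisting of all these points and their two limits satisfies $\gamma^n K\cap K\neq\emptyset$ for all $n$, killing properness. (These are exactly the directions of geodesics spiralling onto a closed geodesic in one time direction, Charette's bispiralling phenomenon in one direction only.) The null case you cite really is different: there the relevant object is a \emph{single} point of $\partial\Ht$, and properness on the domain of discontinuity $\partial\Ht\setminus\Lambda_{\L(\Gamma)}$ is classical; for pairs, properness holds only if you require \emph{both} endpoints off $\Lambda_{\L(\Gamma)}$, but that smaller $\Omega$ would only show that a recurrent spacelike geodesic has one endpoint in the limit set, not both, so the approach cannot be patched by shrinking $\Omega$. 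The conclusion you need ($v$ lies over $\Ur$) is true, and the paper obtains it directly: the direction map $\mathscr{S}\xrightarrow{\Upsilon}\dS$ is continuous and $\L$-equivariant, so recurrence of the $\psi$-orbit of $\gamma$ pushes forward to recurrence of the corresponding $\phi$-orbit in $\US$ --- concretely, if $\gamma_n\tilde\psi_{t_n}(\tilde x)\to\tilde x$ with $\gamma_n\to\infty$, then $\L(\gamma_n)v\to v$, and convergence dynamics then force both endpoints into $\Lambda_{\L(\Gamma)}$. With that substitution, your final step (apply $\bold\hN$ to the recurrent orbit and pin down $\gamma$ via Lemma~\ref{lemma:parallel}) coincides with the paper's argument.
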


\begin{proof}
An orbit of the geodesic flow $\phi$  recurs
if and only if the corresponding $\Gamma$-orbit in the 
space $\mathscr{S}$ of spacelike geodesics in $\E$ recurs.
Similarly a $\phi$-orbit in $\UpM$ recurs if and only if the corresponding
$\L(\Gamma)$-orbit in $\dS$ recurs.
The map 
$\mathscr{S} \xrightarrow{\Upsilon} \dS$
recording the direction of a spacelike geodesic
is $\L$-equivariant.
If the $\Gamma$-orbit  of $g\in\mathscr{S}$ corresponds to
a recurrent spacelike geodesic in $M$,
then the $\L(\Gamma)$-orbit of $\Upsilon(g)$ corresponds to
a recurrent $\phi$-orbit in $\US$.

$\bold \hN$ is injective along orbits of the geodesic flow. 
Thus  it suffices  to prove that the restriction
of $\Upsilon$ to the subset
of $\Gamma$-recurrent geodesics in $\mathscr{S}$  
is injective. Since the fibers of $\Upsilon$ are parallelism classes
of spacelike geodesics, Lemma~\ref{lemma:parallel} implies
injectivity of $\bold\hN$.

Finally let $g$ be a $\psi$-recurrent point in 
$\UpM$, corresponding to a spacelike  recurrent geodesic $\gamma$
in $M$. It corresponds to a recurrent $\Gamma$-orbit $\Gamma g$ in 
$\mathscr{S}$. Then $\Upsilon(\Gamma g)$ is a recurrent $\L(\Gamma)$-orbit in $\dS$, and corresponds to a recurrent $\phi$-orbit in $\US$.
The image of this $\phi$-orbit under $\bold\hN$ is a  spacelike recurrent
geodesic in $\UpM$ parallel to $\gamma$. Now apply 
Lemma~\ref{lemma:parallel} again to conclude that $g$ lies in the image of $\bold\hN$.
\end{proof}
The proof of Theorem~\ref{thm:main} is complete.

\section{Appendix: Cohomology and positive functions}


Let $X$ be a smooth manifold equipped with a smooth flow $\phi$.
A function $g\in C(X)$ is {\em continuously differentiable along $\phi$\/}
if for each $x\in X$, the function
$$
t \mapsto  g\big(\phi_t(x)\big)
$$
is a continuously differentiable map $\R\longrightarrow X$. 
Denote the subspace of $C(X)$ consisting of functions continuously
differentiable along $\phi$ by $C_\phi(X)$. For $g\in C_\phi(X)$,
denote its directional derivative by:
$$
\phi(g) \;:=\;  \frac{d}{dt}\bigg|_{t=0} g\circ\phi_{t}.
$$
The proof of Lemma~\ref{lemma:positive} will be based on two lemmas.
\begin{lemma}\label{lem:averaging_cohomology}
Let $f\in C_\phi(X)$. 
For any  $T>0 $, define
\begin{equation*}
f_T(x)\;:=\; \frac{1}{T}\int_0^T f\big(\phi_s(x)\big)\ {\d}s.
\end{equation*}
Then $f \sim f_T$.
\end{lemma}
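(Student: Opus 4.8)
The plan is to produce an explicit primitive. By the definition of $\sim$, it suffices to exhibit a function $g\in C_\phi(X)$ for which $f-f_T=\phi(g)$. The natural candidate is the ``averaged partial integral'' of $f$ along the flow, so I would set
\begin{equation*}
g(x)\;:=\;-\frac{1}{T}\int_0^T\!\!\int_0^s f\big(\phi_u(x)\big)\,\d u\,\d s .
\end{equation*}
The motivation is that the inner integral $A_s(x):=\int_0^s f(\phi_u(x))\,\d u$ is an honest antiderivative of $f$ along each orbit, so its flow-direction derivative should telescope into a difference of $f$ at the two endpoints; averaging over $s\in[0,T]$ then converts one endpoint into the time-average $f_T$.

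First I would compute $\phi(A_s)$. Writing $A_s(\phi_t(x))=\int_0^s f(\phi_{u+t}(x))\,\d u$ and substituting $w=u+t$ gives $A_s(\phi_t(x))=\int_t^{s+t} f(\phi_w(x))\,\d w$; differentiating at $t=0$ by the fundamental theorem of calculus yields
\begin{equation*}
\phi(A_s)(x)\;=\;f\big(\phi_s(x)\big)-f(x).
\end{equation*}
Next I would differentiate $g$ itself. Since $g=-\tfrac1T\int_0^T A_s\,\d s$, differentiating under the integral sign and inserting the previous identity gives
\begin{equation*}
\phi(g)(x)\;=\;-\frac{1}{T}\int_0^T\big(f(\phi_s(x))-f(x)\big)\,\d s\;=\;f(x)-f_T(x),
\end{equation*}
which is exactly $f-f_T$. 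Hence $f-f_T=\phi(g)$ and $f\sim f_T$.

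The only points requiring genuine care — and what I would treat as the main obstacle — are the analytic justifications rather than the algebra: verifying that $g\in C_\phi(X)$ and that differentiation under the integral sign is legitimate. Both follow from the hypothesis $f\in C_\phi(X)$, since along each orbit $u\mapsto f(\phi_u(x))$ is continuous, so $A_s$ and $g$ are well defined and continuous, and $t\mapsto g(\phi_t(x))$ inherits continuous differentiability from the continuity of $s\mapsto f(\phi_s(x))$. I would also note that the change of variables $w=u+t$ uses only the flow property $\phi_{u+t}=\phi_t\circ\phi_u$, so nothing beyond the elementary structure of the $\R$-action enters.
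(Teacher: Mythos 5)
Your proof is correct and is essentially the paper's own argument: you take the same primitive $g$ (the double integral of $f$ along the flow, differing from the paper's choice only by a sign, which is immaterial for cohomology) and verify $f - f_T = \phi(g)$ by the same telescoping use of the fundamental theorem of calculus along orbits. The analytic caveats you flag (continuity of $g$, differentiation under the integral) are handled the same way implicitly in the paper and are unproblematic since the relevant integrands are continuous over compact parameter intervals.
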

\begin{proof}
We must show that there exists a function  $g\in C_\phi(X)$ such that:
\begin{equation*}
f_T - f  \;=\;  \phi g .  
\end{equation*}
By the fundamental theorem of calculus,
\begin{equation*}
f\circ \phi_{t}  \;=\;  f + \int_0^t (\phi f\circ\phi_{s}) \ {\d}s.
\end{equation*}
Writing
\begin{equation*}
g = \frac1{T} \int_0^T  \int_0^t (f \circ \phi_{s}) \ {\d}s\ {\d}t, 
\end{equation*}
then
\begin{align*}
f_T - f & =   \frac1{T} \int_0^T (f\circ \phi_{t}   - f)    \;{\d}t \\
& = \frac1{T} \int_0^T  \int_0^t \phi(f \circ \phi_{s})
 \; {\d}s\; {\d}t \\
& = \phi g .
\end{align*}
as desired.\end{proof}

\begin{lemma}\label{lem:averaging_positive}
Assume that for all $\phi$-invariant measures $\mu$, 
$$
\int f{\d}\mu >0.
$$
Then 
$f_T>0$ for some $T>0$.
\end{lemma}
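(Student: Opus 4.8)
The plan is to argue by contradiction, producing a single $\phi$-invariant probability measure that violates the hypothesis $\int f\,\d\mu>0$. Throughout I would use that $X$ is compact (the setting of Lemma~\ref{lemma:positive}), so that the space of Borel probability measures on $X$ is weak-$*$ compact. Suppose no admissible $T$ exists. Since $X$ is compact and each $f_T$ is continuous, this means that for every integer $n\ge 1$ there is a point $x_n\in X$ with
\begin{equation*}
f_n(x_n)\;=\;\frac1n\int_0^n f\big(\phi_s(x_n)\big)\,\d s\;\le\;0.
\end{equation*}

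The first step is to encode these orbit averages as measures. To each $x_n$ I would associate the orbit-averaged probability measure $\mu_n$ determined by
\begin{equation*}
\int_X h\,\d\mu_n\;=\;\frac1n\int_0^n h\big(\phi_s(x_n)\big)\,\d s\qquad(h\in C(X)),
\end{equation*}
so that by construction $\int f\,\d\mu_n=f_n(x_n)\le 0$ for every $n$. By weak-$*$ compactness I would then extract a subsequence $\mu_{n_k}$ converging weak-$*$ to some probability measure $\mu$. Because $f\in C(X)$, passing to the limit immediately yields $\int f\,\d\mu=\lim_k\int f\,\d\mu_{n_k}\le 0$, so it only remains to show that $\mu$ is $\phi$-invariant in order to contradict the hypothesis.

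The key step, and the one I expect to be the main obstacle, is the invariance of the limit $\mu$; this is the classical Krylov--Bogolyubov argument. For a fixed time $t$ and $h\in C(X)$, a change of variables rewrites $\int (h\circ\phi_t)\,\d\mu_n-\int h\,\d\mu_n$ as $\frac1n$ times the difference of the integral of $h$ over the two short intervals $[n,n+t]$ and $[0,t]$ along the orbit of $x_n$; this difference is bounded in absolute value by $2\lvert t\rvert\,\lVert h\rVert_\infty/n$ and hence tends to $0$. Taking the limit along $n_k$ gives $\int (h\circ\phi_t)\,\d\mu=\int h\,\d\mu$ for every $t$ and every $h\in C(X)$, so $\mu$ is $\phi$-invariant. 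Combined with $\int f\,\d\mu\le 0$, this contradicts the hypothesis, and the contradiction proves that $f_T>0$ on all of $X$ for some $T>0$. The weak-$*$ compactness and the final passage to the limit are routine; the care required is entirely in making the short-interval error estimate uniform so that the invariance identity survives the limit.
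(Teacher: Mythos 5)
Your proposal is correct and follows essentially the same route as the paper: argue by contradiction, push forward normalized Lebesgue measure along orbit segments of length $T_n\to\infty$, extract a weak-$*$ limit, and show it is a $\phi$-invariant probability measure with $\int f\,\d\mu\le 0$, contradicting the hypothesis. The only difference is that you write out the Krylov--Bogolyubov invariance estimate explicitly, whereas the paper delegates that step to \cite{GoldmanLabourieMargulis}, \S 7; note also that the existence of points $x_n$ with $f_n(x_n)\le 0$ needs no compactness---it is just the negation of the conclusion---while compactness of $X$ is what you correctly use for weak-$*$ compactness of the measures.
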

\begin{proof} 
Otherwise  sequences $\seq{T}$ of positive real numbers and
sequences 
$\seq{x}$ of points in $M$ exist such that 
$$
f_{T_n}(x_n)\leq 0.
$$ 
Using the flow $\phi_t$, push forward 
the (normalized) Lebesgue measure
\begin{equation*}
\frac1{T_m}\mu_{[0,T_m]}
\end{equation*}  
on the interval $[0,T_m]$ to $X$,
to obtain a sequence of probability measures $\mu_n$
on $X$ such that
\begin{equation*}
\int  f\ {\d}\mu_n \;\leq\;  0.
\end{equation*}
As in \cite{GoldmanLabourieMargulis}, \S 7, 
a subsequence weakly converges to an 
$\phi$-invariant measure $\mu$ for which
$$
\int f\ {\d}\mu\leq 0,
$$
contradicting our hypotheses.
\end{proof}

\begin{proof}[Proof of Lemma~\ref{lemma:positive}]
By Lemma~\ref{lem:averaging_cohomology}, $f\sim f_T$ for
any $T>0$, and Lemma~\ref{lem:averaging_positive} implies
that  $f_T>0$ for some $T$.
\end{proof}

 \end{document}